\newtheorem{theorem}{Theorem}[section]
\newtheorem{lemma}[theorem]{Lemma}
\newtheorem{proposition}[theorem]{Proposition}
\newtheorem{remark}[theorem]{Remark}
\numberwithin{equation}{section}
\newcommand{\Tr}{\text{Tr\,}}
\begin{document}
\setcounter{page}{1}

\thanks{Supported by the G\"oran Gustafsson Foundation (KVA), the ESF-network MISGAM and
the EU network ENIGMA}


\title[A multi-dimensional Markov chain and the Meixner ensemble]
{A multi-dimensional Markov chain and the Meixner ensemble}
\author[K.~Johansson]{Kurt Johansson}

\address{
Department of Mathematics,
Royal Institute of Technology,
SE-100 44 Stockholm, Sweden}

\email{kurtj@kth.se}

\begin{abstract}
We show that the transition probability of the Markoc chain \newline
$(G(j,1),\dots,G(j,n))_{j\ge 1}$,
where the $G(i,j)'s$ are certain directed last-passage times, is given by a determinant
of a special form. An analogous formula has recently been obtained by Warren in a Brownian
motion model. Furthermore we demonstrate that this formula leads to the Meixner ensemble
when we compute the distribution function for $G(m,n)$. We also obtain the Fredholm
determinant  representation of this distribution, where the kernel has a double contour integral
representation.

\end{abstract}

\maketitle

\section{Introduction}\label{sect1}

The starting point for the present paper are some nice results from the interesting paper
\cite{Wa} by J. Warren. Let $B_k(t)$, $1\le k\le n$, be independent Brownian motions started
at the origin and define $X_k(t)$, $k=1,\dots, n$, recursively by
\begin{equation}\label{1.1}
X_k(t)=\sup_{0\le s\le t}(X_{k-1}(s)+B_k(t)-B_k(s)),
\end{equation}
$t\ge 0$. The multi-dimensional Markov process $\mathbf{X}(t)=(X_1(t),\dots,X_n(t))$ at a fixed 
time is closely related to the largest eigenvalues of succesive principal submatrices of a 
GUE matrix. In fact, let $H=(h_{ij})_{1\le i,j\le n}$, be an $n\times n$ GUE matrix, i.e. 
distributed according to the probability measure $Z_n^{-1}\exp(-\Tr H^2)dH$ on the space of
$n\times n$ Hermitian matrices, and let $H_k=(h_{ij})_{1\le i,j\le k}$, $1\le k\le n$, be 
the principal submatrices. Then, if $\lambda_{\max} (M)$ denotes the largest eigenvalue of 
the Hermitian matrix $M$, we have $X(1/2)=(\lambda_{\max} (H_1),\dots,\lambda_{\max} (H_n))$
in distribution, \cite{Ba}, \cite{GTW}, \cite{Wa}.
Furthermore, there is a nice formula for the transition function of the Markov process
$\mathbf{X}(t)$, \cite{Wa},
\begin{equation}\label{1.2}
\mathbb{P}[\mathbf{X}(t)=y\,|\,\mathbf{X}(s)=x]=
\det(D^{j-i}\phi_{t-s}(y_j-x_i))_{1\le i,j\le n},
\end{equation}
if $x_1\le\dots\le x_n$, $y_1\le\dots\le y_n$.
Here $D$ denotes ordinary differentiation, $D^{-1}$ is anti-derivation,
\begin{equation}\label{1.3}
D^{-k}f(x)=\int_{-\infty}^x\frac{(x-y)^{k-1}}{(k-1)!}f(y)dy,
\end{equation}
and $\phi_t(x)=(2\pi t)^{-1/2}\exp(-x^2/2t)$ is the transition density for Brownian motion.

Let $F_{\text{GUE}(n)}(\eta)$ be the distribution function for the largest eigenvalue of an
$n\times n$ Hermitian matrix $H$ from GUE. It follows easily from (\ref{1.2}) that
\begin{equation}\label{1.4}
F_{\text{GUE}(n)}(\eta)=\det(D^{j-i+1}\phi_{1/2}(\eta))_{1\le i,j\le n}.
\end{equation}
This formula, given in \cite{Wa}, can also be obtained directly from the GUE eigenvalue measure,
see proposition \ref{prop2.3} below.

We will show in this paper, starting from definitions, that we have analogous formulas for
the vector $\mathbf{G}(i)=(G(i,1),\dots, G(i,n))$, $i\ge 0$ of certain last-passage times defined
as follows, \cite{JSh}. Let $w(i,j)$, $(i,j)\in\mathbb{Z}_+^2$, be independent geometric random
variables with parameter $q$, $0<q<1$,
\begin{equation}\label{1.5}
\mathbb{P}[w(i,j)=k]=(1-q)q^k
\end{equation}
and define
\begin{equation}\label{1.6}
G(m,n)=\max_{\pi}\sum_{(i,j)\in\pi} w(i,j),
\end{equation}
where the maximum is over all up/right paths from $(1,1)$ to $(m,n)$. It is clear that these 
random variables satisfy the recursion relation
\begin{equation}\label{1.6.2}
G(m,n)=\max(G(m-1,n),G(m,n-1))+w(m,n),
\end{equation}
where $G(0,n)=G(n,0)=0$ for $n\ge 1$. If we use this recursion relation repeatedly we see that
\begin{equation}
G(m,n)=\max_{1\le j\le n}(G(j,n-1)+\sum_{i=1}^mw(i,n)-\sum_{i=1}^{j-1} w(i,n)),
\notag
\end{equation}
which looks like a discrete version of (\ref{1.1}). From this it is reasonable to expect 
that there should be a formula for the transition function for the Markov chain
$(\mathbf{G}(i))_{i\ge 0}$ similar to (\ref{1.2}). This is indeed the case in a very natural
way where the differentiation operator is replaced by a finite difference operator, see
theorem \ref{thm2.1}. This will imply a formula similar to (\ref{1.3}) for the distribution
function of $G(m,n)$, see theorem \ref{thm2.2}. We will also show how we can go from this
formula (\ref{2.3}) to the known expressions \cite{Ok}, \cite{JDPNG}, for this 
distribution function in terms of the Meixner ensemble, \cite{JSh}, and as a Fredholm
determinant with a double contour integral expression for the kernel, \cite{Ok}, \cite{JDPNG}.
The Fredholm determinant formula has the advantage that it is much better suited for
computation of asymptotics.
The argument in this paper leading to (\ref{2.8}) gives an alternative approach, starting 
from the definitions, to this formula.

Results related to the transition probability (\ref{2.2}) for $(\mathbf{G}(i))_{i\ge 0}$,
but in the case of $w(i,j)$ exponentially distributed, go back to the work of G. Sch\"utz,
\cite{Sc}, where the totally asymmetric exclusion process (TASEP) is studied using the
Behte ansatz, see e.g. \cite{JSh} for a discussion of the relation to $G(m,n)$.
This is not exactly the same Markov chain, but the results of Sch\"utz can also be used
to derive the expression for the distribution function for $G(m,n)$ in terms of the Laguerre
ensemble, see \cite{RaSc} and also \cite{NaSa}. In \cite{RaSc} the case of geometric random 
variables is also considered and the formula for the distribution function for $G(m,n)$
in terms of the Meixner ensemble derived. This is based on results from \cite{BPS} on a
discrete TASEP-type model. More general formulas for the asymmetric exclusion process
(ASEP) have been proved recently in \cite{TrWi}. Results generalizing the formula (\ref{1.2})
to discrete models has also been given independently in \cite{DiWa}.

\section{Results}\label{sect2}


For $z\in\mathbb{Z}$ we define $w(x)=(1-q)q^xH(x)$, $0<q<1$, where $H$ is the Heaviside 
function,  $H(x)=0$ if $x<0$ and $H(x)=1$ if $x\ge 0$. The $m$-fold convolution of $w$
with itself is then the negative binomial distribution,
\begin{equation}\label{2.1}
w_m(x)=(1-q)^m\binom{x+m-1}{x}q^xH(x),
\end{equation}
as is not difficult to see using generating functions. 

For a function $f:\mathbb{Z}\to\mathbb{C}$ we denote by $\Delta$ the usual finite difference 
operator, $\Delta f(x)=f(x+1)-f(x)$. We also set
\begin{equation}
(\Delta^{-1}f)(x)=\sum_{y=-\infty}^{x-1} f(y)
\notag
\end{equation}
provided the sum is convergent, and $\Delta^0f=f$. Note that $\Delta(\Delta^{-1}f)=
\Delta^{-1}(\Delta f)=f$.

Let $W_n=\{x\in\mathbb{Z}^n\,;\, x_1\le \dots\le x_n\}$, and let $\mathbf{G}(i)$, $i\ge 0$, 
be the Markov chain defined in the introduction.

\begin{theorem}\label{thm2.1}
If $x,y\in W_n$, then for $m\ge \ell$,
\begin{equation}\label{2.2}
\mathbb{P}[\mathbf{G}(m)=y\,|\,\mathbf{G}(\ell)=x]=\det(\Delta^{j-i}
w_{m-\ell}(y_j-x_i))_{1\le i,j\le n}.
\end{equation}
\end{theorem}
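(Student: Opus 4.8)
The plan is to fix $\ell$ and induct on $N=m-\ell\ge 1$, reducing the theorem to two ingredients: the one-step law ($N=1$) and a Chapman--Kolmogorov composition rule for the conjectured kernels. Write $p_N(x,y):=\det(\Delta^{j-i}w_N(y_j-x_i))_{1\le i,j\le n}$. Since $(\mathbf{G}(i))_{i\ge 0}$ is Markov and lives in $W_n$ (each coordinate $G(m,k)$ is non-decreasing in $k$), we have $\mathbb{P}[\mathbf{G}(m)=y\,|\,\mathbf{G}(\ell)=x]=\sum_{z\in W_n}\mathbb{P}[\mathbf{G}(m)=y\,|\,\mathbf{G}(m-1)=z]\,\mathbb{P}[\mathbf{G}(m-1)=z\,|\,\mathbf{G}(\ell)=x]$. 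Once the one-step kernel is identified with $p_1$ and the inductive hypothesis identifies the $(N-1)$-step kernel with $p_{N-1}$, it then remains to prove $\sum_{z\in W_n}p_{N-1}(x,z)\,p_1(z,y)=p_N(x,y)$; more generally I would establish the semigroup identity $\sum_{z\in W_n}p_a(x,z)\,p_b(z,y)=p_{a+b}(x,y)$.

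For the base case I would read the one-step law directly off the recursion (\ref{1.6.2}). Given $\mathbf{G}(m-1)=x$ and the fresh weights $w_k:=w(m,k)\ge 0$, the update $y_k=\max(x_k,y_{k-1})+w_k$ (with the convention $\max(x_1,y_0)=x_1$) determines each $w_k$ uniquely from $x,y$, so
\begin{equation}
\mathbb{P}[\mathbf{G}(m)=y\,|\,\mathbf{G}(m-1)=x]=w(y_1-x_1)\prod_{k=2}^n w\big(y_k-\max(x_k,y_{k-1})\big),
\notag
\end{equation}
which is nonzero exactly when $y\in W_n$ and $y_k\ge x_k$ for all $k$. I would then show this product equals $p_1(x,y)$ by induction on $n$, expanding the determinant along its last column and using the explicit evaluations $\Delta^{-1}w(z)=(1-q^z)H(z-1)$ and $\Delta w(z)=(1-q)\delta_{z,-1}-(1-q)^2q^zH(z)$. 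The two alternatives hidden in $\max(x_k,y_{k-1})$ should match precisely the surviving terms of the cofactor expansion: the diagonal minor reproduces the $(n-1)$-dimensional kernel together with the factor $w(y_n-x_n)$, while the sub- and super-diagonal powers $\Delta^{\pm 1}$ supply the telescoping that converts this into $w(y_n-\max(x_n,y_{n-1}))$ and reinstalls the barrier.

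The inductive step rests on the composition identity, which I expect to be the main obstacle. The natural tools are the convolution semigroup $w_a*w_b=w_{a+b}$ together with $\Delta^{k}(f*g)=(\Delta^{k_1}f)*(\Delta^{k_2}g)$ for $k=k_1+k_2$, so that at the level of individual entries both the difference powers and the index $N$ add correctly. The obstruction is that the weight $\Delta^{j-i}$ couples the row index $i$ and the column index $j$, so $p_a(x,z)=\det(\Delta^{j-i}w_a(z_j-x_i))$ is \emph{not} of the simple Karlin--McGregor form $\det(\kappa(x_i,z_j))$ and ordinary Cauchy--Binet composition does not apply verbatim. To get around this I would linearize each entry by a contour integral,
\begin{equation}
\Delta^{j-i}w_a(y_j-x_i)=\frac{1}{2\pi i}\oint (z^{-1}-1)^{j-i}\Big(\frac{1-q}{1-qz}\Big)^a z^{\,x_i-y_j-1}\,dz,
\notag
\end{equation}
substitute into $p_a$ and $p_b$, and perform the sum over the intermediate chamber $W_n$ by a reflection/summation-by-parts argument: extending the sum to all of $\mathbb{Z}^n$ and discarding the antisymmetric out-of-chamber contributions allows the geometric series in the intermediate variable to be summed, after which $w_a*w_b=w_{a+b}$ and the additivity of the $(z^{-1}-1)$ powers collapse the two integrals into the single kernel $p_{a+b}$. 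Controlling the boundary terms generated by the extra difference operators (the $\delta_{z,-1}$ pieces above) and justifying the interchange of summation that legitimizes the reflection is the delicate part, and the step I expect to absorb most of the work.
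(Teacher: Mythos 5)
Your overall architecture --- identify the one-step law from (\ref{1.6.2}), prove it equals the $n\times n$ determinant by induction on $n$ with a cofactor expansion, then get general $m-\ell$ from a semigroup/convolution identity for the determinantal kernels --- is exactly the paper's (its Lemma \ref{lem3.1} and Lemma \ref{lem3.2}), and your base case, including the explicit evaluations of $\Delta^{\pm 1}w$, is essentially the paper's proof of Lemma \ref{lem3.1} transposed (the paper expands along the last row rather than the last column). The genuine gap is in the composition identity $\sum_{z\in W_n}p_a(x,z)\,p_b(z,y)=p_{a+b}(x,y)$, which you correctly flag as the main obstacle but whose proposed resolution does not work as described. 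The reflection step --- ``extending the sum to all of $\mathbb{Z}^n$ and discarding the antisymmetric out-of-chamber contributions'' --- requires the summand to be antisymmetric under permutations of the intermediate variables $z_1,\dots,z_n$, and it is not: that is precisely the row--column coupling you yourself identified. Passing to contour integrals does not repair this, because the coupling survives inside the integrand: in $p_a(x,z)$ each $z_j$ carries the factor $(u^{-1}-1)^{j}u^{-z_j}$ and in $p_b(z,y)$ each $z_i$ carries $(v^{-1}-1)^{-i}v^{z_i}$, so permuting the $z$'s still does not permute columns, there is no cancellation to invoke, and what remains is a chamber-constrained nested geometric sum that does not collapse by itself.

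The missing idea is the content of the paper's Lemma \ref{lem3.2}: \emph{before} any Cauchy--Binet or reflection step, one performs repeated summation by parts (formula (\ref{3.7})) in the intermediate variables $z_n,z_{n-1},\dots$, moving difference operators from one determinant onto the other. Each sweep either produces boundary terms that vanish --- because $\Delta^{k}g$ vanishes far to the right of its support, or because the boundary term creates a determinant with two equal columns --- or lowers the exponents, and after finitely many sweeps the product takes the decoupled form $\det(\Delta^{1-i}f(z_j-x_i))\det(\Delta^{i-1}g(y_i-z_j))$, in which the power of $\Delta$ depends only on the row index and each intermediate variable $z_j$ enters only as an argument. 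Only at this point is the summand antisymmetric in $z$, so the sum over $W_n$ equals $1/n!$ times the sum over $\mathbb{Z}^n$ and the generalized Cauchy--Binet identity (\ref{2.5}) applies; the proof is then finished by the one-variable identity $\sum_{z\in\mathbb{Z}}\Delta^{1-i}f(z-x)\Delta^{j-1}g(y-z)=\Delta^{j-i}(f\ast g)(y-x)$, proved via $\Delta^{-1}f=h\ast f$ with $h(x)=H(x-1)$. Without this decoupling mechanism (or a fully worked-out substitute for it), your inductive step, and hence the theorem, remains unproven.
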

We postpone the proof to section \ref{sect3}. The proof first shows (\ref{2.2}) in the case
$m=\ell+1$ using (\ref{1.6.2}) and an induction argument in $n$, and then establishes a
convolution formula for the determinants involved using the generalized Cauchy-Binet identity
(\ref{2.5}).

Taking $\mathbf{G}(0)=0$ it is not difficult to show, see section \ref{sect3}, that we have 
the following consequence of (\ref{2.2}), which is analogous to (\ref{1.4}).
\begin{theorem}\label{thm2.2}
For any $\eta\in\mathbb{N}$, $m\ge n\ge 1$,
\begin{equation}\label{2.3}
\mathbb{P}[G(m,n)\le\eta]=\det(\Delta^{j-i-1}w_m(\eta+1))_{1\le i,j\le n}.
\end{equation}
\end{theorem}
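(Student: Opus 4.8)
The plan is to obtain (\ref{2.3}) from (\ref{2.2}) by summing the transition probability over the appropriate region. Setting $\ell=0$ and $x=\mathbf{G}(0)=0$ in Theorem \ref{thm2.1} gives, for $y\in W_n$,
\begin{equation}
\mathbb{P}[\mathbf{G}(m)=y]=\det(\Delta^{j-i}w_m(y_j))_{1\le i,j\le n}.
\notag
\end{equation}
Since $G(m,n)$ is the last coordinate of $\mathbf{G}(m)$ and $\mathbf{G}(m)\in W_n$ by the monotonicity $G(m,k)\ge G(m,k-1)$ coming from (\ref{1.6.2}), summing over all $y$ with $y_1\le\dots\le y_n\le\eta$ yields
\begin{equation}
\mathbb{P}[G(m,n)\le\eta]=\sum_{y_1\le\dots\le y_n\le\eta}\det(\Delta^{j-i}w_m(y_j))_{1\le i,j\le n}.
\notag
\end{equation}
Thus (\ref{2.3}) reduces to the combinatorial identity
\begin{equation}\label{sumlemma}
\sum_{y_1\le\dots\le y_n\le\eta}\det(\Delta^{j-i}\phi(y_j))_{1\le i,j\le n}=\det(\Delta^{j-i-1}\phi(\eta+1))_{1\le i,j\le n},
\end{equation}
valid for any summable $\phi:\mathbb{Z}\to\mathbb{C}$, which I will then apply with $\phi=w_m$.

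To prove (\ref{sumlemma}) I would carry out the $n$ summations one variable at a time, starting from the largest variable $y_n$ and working down. In the matrix $(\Delta^{j-i}\phi(y_j))$ the $j$-th column depends only on $y_j$, so summing over $y_n\in[y_{n-1},\eta]$ affects only the last column. Using the telescoping relation $\sum_{y=a}^{b}\Delta g(y)=g(b+1)-g(a)$ with $g=\Delta^{n-i-1}\phi$, the $(i,n)$ entry becomes $\Delta^{n-i-1}\phi(\eta+1)-\Delta^{n-i-1}\phi(y_{n-1})$. By multilinearity of the determinant in its columns this splits the sum into two determinants. In the second one the new last column has entries $\Delta^{(n-1)-i}\phi(y_{n-1})$, which coincide exactly with the entries of column $n-1$; having two equal columns, that determinant vanishes. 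Hence summing out $y_n$ simply replaces the last column by the column $\Delta^{n-i-1}\phi(\eta+1)$, which is constant in the remaining variables, and leaves the region $y_1\le\dots\le y_{n-1}\le\eta$.

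Iterating, the $k$-th step sums over $y_{n-k+1}$ and, by the same shift identity $\Delta^{-1}\Delta^{p}=\Delta^{p-1}$, converts column $n-k+1$ into the constant column with entries $\Delta^{(n-k+1)-i-1}\phi(\eta+1)$; the lower-limit boundary term again duplicates the adjacent column $n-k$ and drops out. The final summation over $y_1\in(-\infty,\eta]$ has no lower boundary term because $\phi$ is summable, and produces the first column $\Delta^{-i}\phi(\eta+1)=\Delta^{1-i-1}\phi(\eta+1)$. Collecting the columns gives exactly $\det(\Delta^{j-i-1}\phi(\eta+1))_{1\le i,j\le n}$, which is (\ref{sumlemma}). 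Taking $\phi=w_m$, whose support in $\mathbb{Z}_{\ge 0}$ guarantees that all the sums converge and that the boundary terms behave as claimed, gives (\ref{2.3}).

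The main obstacle, and the point that makes the argument work, is the cancellation of the lower-limit boundary terms: one must check that at every stage the boundary term produced by the telescoping sum reproduces the neighboring, not-yet-summed column precisely because of the unit shift in the exponent of $\Delta$, so that it is annihilated by the determinant. Once this shift-compatibility is observed the computation telescopes cleanly; tracking the successive summation regions $y_1\le\dots\le y_k\le\eta$ and the convergence at $-\infty$ are the only other points to verify.
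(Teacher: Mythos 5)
Your proposal is correct and follows essentially the same route as the paper: starting from Theorem \ref{thm2.1} with $\mathbf{G}(0)=0$, you sum the determinant $\det(\Delta^{j-i}w_m(y_j))$ over $y_1\le\dots\le y_n\le\eta$ one variable at a time from $y_n$ downward, telescoping each column to $\Delta^{j-i-1}w_m(\eta+1)$ and discarding the lower boundary term because it duplicates the adjacent column. This column-by-column telescoping with the equal-column cancellation is exactly the paper's argument, including your closing remark that the support of $w_m$ in $\mathbb{Z}_{\ge 0}$ handles the final sum over $y_1$.
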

As stated in the introduction it is possible to relate the expression in the right hand side of
(\ref{1.3}) directly to the expression for the distribution function coming from the GUE
eigenvalue measure. Let
\begin{equation}
\Delta_n(x)=\det(x_i^{j-1})_{1\le i,j\le n}
\notag
\end{equation}
be the Vandermonde determinant.

\begin{proposition}\label{prop2.3}
We have the following identity for any $\eta\in\mathbb{R}$, $n\ge 1$,
\begin{equation}\label{2.4}
\det(D^{j-i+1}\phi_{1/2}(\eta))_{1\le i,j\le n}
=\frac 1{Z_n}\int_{(-\infty,\eta]^n}\Delta_n(x)^2\prod_{j=1}^ne^{-x_j^2}d^nx,
\end{equation}
where $Z_n$ is the appropriate normalization constant.
\end{proposition}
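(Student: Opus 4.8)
The plan is to evaluate both sides as Hankel determinants in the incomplete Gaussian moments $\mu_k(\eta)=\int_{-\infty}^{\eta}x^k e^{-x^2}\,dx$ and to match them. First I would rewrite the right-hand side of (\ref{2.4}) using Heine's identity (equivalently Andr\'eief's formula): writing the Vandermonde as $\Delta_n(x)=\det(x_i^{k-1})$ and expanding $\Delta_n(x)^2$ as a product of two such determinants, the $n$-fold integral collapses to $\frac{n!}{Z_n}\det(\mu_{i+j-2}(\eta))_{1\le i,j\le n}$. Since the Vandermonde is unchanged when the monomials are replaced by any monic polynomials, I am also free to use the monic Hermite polynomials in one of the two factors, which is what brings the integral side into contact with the derivatives of $\phi_{1/2}$ appearing on the left.

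For the left-hand side I would use $\phi_{1/2}(\eta)=\pi^{-1/2}e^{-\eta^2}$ together with the Rodrigues relation $D^k e^{-\eta^2}=(-1)^k h_k(\eta)e^{-\eta^2}$, where $h_k$ is the $k$-th Hermite polynomial, to identify the differentiated entries, and the iterated-integral formula (\ref{1.3}) to write each anti-derivative entry $D^{-m}\phi_{1/2}(\eta)$ as an integral of $(\eta-x)^{m-1}e^{-x^2}$ against the Gaussian weight. Repeated integration by parts then expresses every entry as a single incomplete moment $\mu_k(\eta)$ (for the appropriate $k$) plus boundary contributions of the form $\eta^{r}h_s(\eta)e^{-\eta^2}$. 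These boundary contributions are lower-triangular in the relevant sense, so I would remove them by determinant-preserving (unit triangular, constant-coefficient) row and column operations, after which the determinant becomes exactly the Hankel moment determinant produced in the first step, up to an explicit product of scalar factors.

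The main obstacle is the bookkeeping in this last reduction: tracking which integration-by-parts boundary terms survive at $\eta$, verifying that they are genuinely cleared by the triangular operations with no leftover, and confirming that the accumulated constants combine with $n!/Z_n$ to give equality rather than mere proportionality; this requires evaluating $Z_n$ through the Gaussian (Mehta) integral. A cleaner alternative, which sidesteps most of the boundary bookkeeping, is to differentiate both sides in $\eta$: the derivative of the moment matrix $(\mu_{i+j-2})$ is the rank-one matrix $e^{-\eta^2}(\eta^{i-1})(\eta^{j-1})$, while the derivative of the determinant on the left reduces to a single determinant with one column replaced, since differentiating any interior column reproduces the next column and kills the term. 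One can then match the two derivatives and fix the remaining constant by the common boundary value as $\eta\to-\infty$, proceeding by induction on $n$.
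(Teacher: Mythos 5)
Your overall skeleton coincides with the paper's: both reduce the multiple integral, via the Andr\'eief/Cauchy--Binet identity, to the Hankel determinant $\det(\mu_{i+j-2}(\eta))$ of incomplete Gaussian moments, and then try to identify the derivative/anti-derivative determinant with that Hankel determinant. The genuine gap is in your reduction of the left-hand side. By (\ref{1.3}), an anti-derivative entry is $D^{-m}\phi_{1/2}(\eta)=\frac{\pi^{-1/2}}{(m-1)!}\int_{-\infty}^{\eta}(\eta-x)^{m-1}e^{-x^2}dx=\frac{\pi^{-1/2}}{(m-1)!}\sum_{k}\binom{m-1}{k}(-1)^k\eta^{m-1-k}\mu_k(\eta)$, i.e.\ a combination of moments whose coefficients are \emph{polynomials in $\eta$}. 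Integration by parts cannot compress this to ``a single $\mu_k$ plus boundary terms'': it only trades $\mu_k$ for $\mu_{k-2}$ plus Gaussian terms, and the $\eta$-dependent coefficients in front of the surviving moments never disappear. Consequently the next step fails as stated: \emph{constant-coefficient} unit-triangular row/column operations cannot bring the matrix to a diagonal rescaling of $(\mu_{i+j-2}(\eta))$. Concretely, for $n=2$ (with the exponent $j-i-1$ that the paper's proof actually uses; the $+1$ in (\ref{2.4}) is a typo, as one sees by letting $\eta\to\infty$) the entry $D^{-2}\phi_{1/2}(\eta)=\pi^{-1/2}\bigl(\eta\mu_0(\eta)+\tfrac12 e^{-\eta^2}\bigr)$ contains the function $\eta\mu_0(\eta)$, which occurs in no other entry and is not a constant linear combination of $\mu_0,\mu_1,\mu_2$ and terms of the form $\eta^r e^{-\eta^2}$; no constant triangular operation can remove it. The operations you need are unit-triangular with $\eta$-dependent coefficients, and once you allow those, the boundary-term bookkeeping you were worried about is the wrong battle to fight.

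The paper's proof avoids boundary terms entirely by a small but crucial trick you are missing: since $1\le i\le n$ and $j-1\ge 0$, \emph{every} entry, including the pure-derivative ones, can be written uniformly as $D^{-i}(D^{j-1}\phi_{1/2})(\eta)$, which by (\ref{1.3}) and Rodrigues' formula equals a constant times $\frac{1}{(i-1)!}\int_{-\infty}^{\eta}(x-\eta)^{i-1}p_{j-1}(x)e^{-x^2}dx$ with $p_{j-1}$ the monic Hermite polynomial. So each entry is an incomplete integral of a product of two monic polynomials in $x$, of degrees $i-1$ and $j-1$; expanding $(x-\eta)^{i-1}$ in powers of $x$ (unit-triangular row operations with $\eta$-dependent coefficients) and $p_{j-1}$ in powers of $x$ (constant column operations) yields exactly $\det(\mu_{i+j-2}(\eta))$ up to explicit constants, with no boundary terms ever arising, after which Cauchy--Binet gives (\ref{2.4}). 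Your fallback plan (differentiate in $\eta$) is also incomplete: the observation that the derivative of the left side is a single column-replaced determinant is correct, but the derivative of the right side is a sum of $n$ row-replaced Hankel determinants (equivalently an $(n-1)$-fold integral with one variable frozen at $\eta$), and matching these two expressions, or setting up a workable induction on $n$, is precisely the content that is missing.
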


\begin{proof}
Let $H_j$, $j\ge 0$, be the standard Hermite polynomials. Then $H_j(x)=2^jp_j(x)$, where
$p_j$ is a monic polynomial, and we have Rodrigues' formula
\begin{equation}
D^je^{-x^2}=(-1)^jH_j(x)e^{-x^2}.
\notag
\end{equation}
Hence,
\begin{align}
D^{j-i-1}\phi_{1/2}(\eta)&=D^{-i}(D^{j-1}\phi_{1/2})(\eta)=\int_{-\infty}^{\eta}
\frac{(\eta-x)^{i-1}}{(i-1)!} D^{j-1}\phi_{1/2}(x)dx
\notag\\
&=\frac{2^{j-1}(-1)^{i+j}}{(i-1)!\sqrt{\pi}}\int_{-\infty}^{\eta}
(x-\eta)^{i-1}p_j(x)e^{-x^2}dx.
\notag
\end{align}
Using row and column operations we obtain
\begin{align}
\det(D^{j-i-1}\phi_{1/2}(\eta))_{1\le i,j\le n}&=\prod_{j=0}^{n-1}\frac{2^j}{j!\sqrt{\pi}}
\det(\int_{-\infty}^{\eta} x^{i-1}x^{j-1}e^{-x^2}dx))_{1\le i,j\le n}
\notag\\
&=\frac 1{Z_n}\int_{(-\infty,\eta]^n}\Delta_n(x)^2\prod_{j=1}^ne^{-x_j^2}d^nx,
\notag
\end{align}
with $Z_n=2^{-n(n-1)/2}\pi^{n/2}\prod_{j=0}^{n-1}j!$. In the last equality we have used 
the generalized Cauchy-Binet identity,
\begin{equation}\label{2.5}
\det(\int_X\phi_i(x)\psi_j(x)d\mu(x))=\frac 1{n!}
\int_{X^n}\det(\phi_i(x_j))\det(\psi_i(x_j))
\prod_{j=1}^nd\mu(x_j)
\end{equation}
where all determinants are $n\times n$.
\end{proof}

We have a similar identity relating the right hand side of (\ref{2.3}) to the Meixner
ensemble. The proof is a little more involved and we postpone it to section \ref{sect3}.

\begin{proposition}\label{prop2.4}
For any $\eta\in\mathbb{N}$, $m\ge n\ge 1$,
\begin{equation}\label{2.6}
\det(\Delta^{j-i-1}w_m(\eta+1))_{1\le i,j\le n}=
\frac 1{Z_{m,n}}\sum_{0\le x_i\le\eta+n-1}
\Delta_n(x)^2\prod_{j=1}^n\binom{x_j+m-n}{x_j}q^{x_j}.
\end{equation}
\end{proposition}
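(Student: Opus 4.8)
The plan is to mirror the proof of Proposition \ref{prop2.3}, replacing differentiation and antidifferentiation by the operators $\Delta,\Delta^{-1}$ and Hermite's Rodrigues formula by its discrete (Meixner) counterpart. First I would split the exponent as $\Delta^{j-i-1}=\Delta^{-i}\Delta^{j-1}$ and use the discrete analogue of (\ref{1.3}), namely $(\Delta^{-i}f)(\eta+1)=\sum_{y\le\eta}\binom{\eta-y}{i-1}f(y)$, which one checks by induction on $i$ (it is the repeated summation). This produces the binomial $\binom{\eta-y}{i-1}$, a polynomial of degree $i-1$ in $y$, playing the role of $(x-\eta)^{i-1}$. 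The aim is to rewrite the $(i,j)$ entry as $\sum_y r_i(y)c_j(y)\omega(y)$ with $r_i$ of degree $i-1$, $c_j$ of controlled degree and $\omega$ an explicit weight, and then to apply the discrete form of the generalized Cauchy--Binet identity (\ref{2.5}) (counting measure in place of $d\mu$).

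The discrete Rodrigues step is the computation of $\Delta^{j-1}w_m$, and I expect it to be the main obstacle. The key observation is that $\binom{x+m-1}{x}=\tfrac{1}{(m-1)!}(x+1)(x+2)\cdots(x+m-1)=:\tfrac1{(m-1)!}P_{m-1}(x)$ is a polynomial of degree $m-1$ whose zeros at $x=-1,\dots,-(m-1)$ exactly absorb the Heaviside factor, so that $w_m(x)=\tfrac{(1-q)^m}{(m-1)!}P_{m-1}(x)q^x$ holds for all $x\ge-(m-1)$. Writing $\Delta=E-1$ with $Ef(x)=f(x+1)$ and using $\Delta q^x=(q-1)q^x$, one gets $\Delta^{j-1}[P_{m-1}(x)q^x]=q^x(qE-1)^{j-1}P_{m-1}(x)$. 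Here the hypothesis $m\ge n$ is essential: since $j-1\le n-1\le m-1$, the forward differences never reach the region $x\le-m$ where the polynomial representation of $w_m$ fails, so the clean formula is valid throughout the summation range. This interplay between the difference operators and the support boundary of $w_m$ is exactly why the proof is ``a little more involved'' than that of Proposition \ref{prop2.3}.

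After a triangular change of basis (contributing a power of $q$) I would replace $(qE-1)^{j-1}P_{m-1}$ by $\Delta^{j-1}P_{m-1}$, and here $\Delta^{j-1}P_{m-1}(y)=\frac{(m-1)!}{(m-j)!}(y+j)(y+j+1)\cdots(y+m-1)$ is a polynomial of degree $m-j$. Together with $\det\big(\binom{\eta-y_k}{i-1}\big)_{i,k}=\prod_i\frac{(-1)^{i-1}}{(i-1)!}\,\Delta_n(y)$, identity (\ref{2.5}) turns the determinant into a lattice sum. The crucial evaluation is that, because the degrees $m-1,m-2,\dots,m-n$ are consecutive, the determinant $\det\big((y_k+j)\cdots(y_k+m-1)\big)_{j,k}$ equals, up to a constant, $\Delta_n(y)\prod_k(y_k+n)(y_k+n+1)\cdots(y_k+m-1)$. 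In the variables $z_k=y_k+m-1$ this is the falling-factorial identity $\det\big((z_k)_{\underline{m-j}}\big)=\pm\Delta_n(z)\prod_k (z_k)_{\underline{m-n}}$, which follows from the factorization $(z)_{\underline{m-j}}=(z)_{\underline{m-n}}\,(z-(m-n))_{\underline{n-j}}$: one pulls $(z_k)_{\underline{m-n}}$ out of each column and is left with a full Vandermonde. Obtaining this exact product, rather than a bare power $\prod y_k^{\,m-n}$, is the second delicate point, and it is precisely what reproduces the Meixner weight.

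Finally I would substitute $x_k=y_k+n-1$. This converts $\prod_k(y_k+n)\cdots(y_k+m-1)$ into $\prod_k(x_k+1)\cdots(x_k+m-n)=((m-n)!)^n\prod_k\binom{x_k+m-n}{x_k}$, sends $q^{y_k}$ to $q^{x_k}$ up to a constant, and, since the antidifference forces $y_k\le\eta$ while the support of $\Delta^{j-1}w_m$ forces $y_k\ge-(n-1)$, turns the summation range into $0\le x_k\le\eta+n-1$. Absorbing $n!$ from (\ref{2.5}) and all the $i$- and $j$-dependent constants into $Z_{m,n}$ then yields exactly the right-hand side of (\ref{2.6}). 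The only remaining bookkeeping is to check that the various sign and factorial constants combine into a single normalization, just as the signs $(-1)^{i+j}$ cancelled in Proposition \ref{prop2.3}.
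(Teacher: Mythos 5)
Your proposal is correct, and its skeleton coincides with the paper's: the split $\Delta^{j-i-1}=\Delta^{-i}\Delta^{j-1}$ with the antidifference written as $\sum_{y\le\eta}\binom{\eta-y}{i-1}f(y)$ (this is exactly the paper's (\ref{3.11})), the truncation of the sum to $y\ge -(n-1)$ using the support of $\Delta^{j-1}w_m$, the Cauchy--Binet identity (\ref{2.5}), the Vandermonde coming from the degree-$(i-1)$ row polynomials, and finally the identification of $\det(\Delta^{j-1}w_m(y_k))$ as a constant times $\Delta_n(y)\prod_k q^{y_k}(y_k+n)\cdots(y_k+m-1)$. Where you genuinely diverge is in how this last identification is proved. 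The paper does it structurally: from the contour-integral representation (\ref{3.9}) it deduces only that $(-1)^k\Delta^k w_m(y)=s_k(y)\prod_{\ell=k+1}^{m-1}(y+\ell)q^yH(y+n-1)$ with $s_k$ some unspecified polynomial of degree $k$ (its (\ref{3.14})), and then invokes a general determinant lemma, $\det(p_{j-1}(x_i)\prod_{k=j+1}^n(x_i+A_k))=B\Delta_n(x)$ (its (\ref{3.15})), so the degree-$k$ factors never need to be computed. You instead compute everything explicitly and elementarily: the polynomial representation $w_m(x)=\frac{(1-q)^m}{(m-1)!}(x+1)\cdots(x+m-1)q^x$ valid for $x\ge -(m-1)$, the commutation $\Delta^{j-1}[Pq^x]=q^x(qE-1)^{j-1}P$, the triangular change of basis from $(qE-1)^{j-1}$ to $\Delta^{j-1}$ (contributing $q^{n(n-1)/2}$), and the falling-factorial factorization $z^{[m-j]}=z^{[m-n]}(z-(m-n))^{[n-j]}$ in place of (\ref{3.15}). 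Your placement of the hypothesis $m\ge n$ --- guaranteeing that the polynomial representation holds on the whole summation range $y\ge-(n-1)$ --- is exactly where the paper also uses it. The trade-off: your route avoids contour integrals entirely and keeps all constants explicit, so in principle it determines $Z_{m,n}$ directly, whereas the paper identifies $Z_{m,n}$ as the Meixner normalization only by letting $\eta\to\infty$ in Theorem \ref{thm2.2}; on the other hand, the paper's use of (\ref{3.9}) is economical in context, since that integral representation is needed anyway for the Fredholm-determinant formula of Proposition \ref{prop2.5}, and its lemma (\ref{3.15}) spares the explicit operator manipulations.
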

For asymptotic analysis it is more useful to have a representation of the distribution 
function for $G(m,n)$ as a Fredholm determinant with an appropriate kernel. It is
possible to go to such a formula using Meixner polynomials and a standard random matrix
theory computation as was done in \cite{JSh}. There is also another formula for the kernel
as a double contour integral which can be obtianed from the Schur measure, see \cite{Ok},
\cite{JDPNG} or \cite{JHou}. It is actually possible to go directly to a Fredholm determinant
formula with a double contour integral formula for the kernel starting from the
expression in the right hand side of (\ref{2.3}).

Let $\gamma_r$ denote a circle centered at the origin with radius $r>0$. Let $1<r_2<r_1<1/q$ 
and define
\begin{equation}\label{2.7}
K_{m.n}(x,y)=\frac 1{(2\pi i)^2}\int_{\gamma_{r_2}}\frac{dz}z\int_{\gamma_{r_1}}\frac{dw}w
\frac{w}{w-z}\frac{z^{x+n}}{w^{y+n}}\frac{(1-qz)^m(1-w)^n}{(1-z)^n(1-qw)^n}
\end{equation}
for $x,y\in\mathbb{Z}$.
\begin{proposition}\label{prop2.5}
For any integer $\eta\ge 0$, $m\ge n\ge 1$,
\begin{equation}\label{2.8}
\mathbb{P}[G(m,n)\le \eta]=\det(I-K_{m,n})_{\ell^2(\{\eta+1,\eta+2,\dots\})}.
\end{equation}
\end{proposition}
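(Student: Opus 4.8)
The plan is to derive (\ref{2.8}) from the determinant (\ref{2.3}) in three moves: represent the matrix entries as contour integrals, re-express (\ref{2.3}) as the Fredholm determinant of a finite-rank operator on $\ell^2(\{\eta+1,\eta+2,\dots\})$, and then resum that operator's kernel into the double integral (\ref{2.7}). Since the generating function of the negative binomial weight is $\sum_x w_m(x)z^x=((1-q)/(1-qz))^m$ and $\Delta$ acts as multiplication by $(1-z)/z$ on the generating variable (so $\Delta^{-1}$ by $z/(1-z)$), every entry has the representation
\[
\Delta^{j-i-1}w_m(\eta+1)=\frac{1}{2\pi i}\int_{\gamma_r}\left(\frac{1-z}{z}\right)^{j-i-1}\left(\frac{1-q}{1-qz}\right)^m\frac{dz}{z^{\eta+2}},\qquad r<1/q.
\]
The structural point is that the $\Delta^{-1}$'s occurring for $j\le i$ are cumulative sums $\sum_{y\le\eta}w_m(y)$, living on the complement of the target set; in particular the diagonal entries equal $\sum_{y\le\eta}w_m(y)=1-\sum_{y>\eta}w_m(y)$, which already displays the ``identity minus tail'' shape of an $I-K$ determinant and makes a Fredholm determinant on $\{\eta+1,\dots\}$ the natural target.

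Second, I would produce biorthogonal families $\Phi_0,\dots,\Phi_{n-1}$ and $\Psi_0,\dots,\Psi_{n-1}$ on $\mathbb{Z}$, each a contour integral of the above type and carrying the coordinate shift $x\mapsto x+n-1$ so that the process lives on $\{\eta+1,\dots\}$ rather than $\{\eta+n,\dots\}$, for which $\sum_{x\in\mathbb{Z}}\Psi_k(x)\Phi_l(x)=\delta_{kl}$ and such that row and column operations identify (\ref{2.3}) with $\det(\delta_{kl}-\sum_{x>\eta}\Psi_k(x)\Phi_l(x))_{0\le k,l\le n-1}$. (Equivalently one may invoke Proposition \ref{prop2.4} and read (\ref{2.3}) as the probability that the Meixner ensemble has no point in $\{\eta+n,\eta+n+1,\dots\}$.) The standard finite-rank identity $\det(I-\sum_{k=0}^{n-1}\Phi_k\otimes\Psi_k)=\det(\delta_{kl}-\langle\Psi_k,\Phi_l\rangle_{\ell^2(\{\eta+1,\dots\})})$ then promotes this $n\times n$ determinant to $\det(I-K_{m,n})_{\ell^2(\{\eta+1,\dots\})}$ with $K_{m,n}(x,y)=\sum_{k=0}^{n-1}\Phi_k(x)\Psi_k(y)$.

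The last step, and the main obstacle, is to resum this finite sum into (\ref{2.7}). Writing $\Phi_k$ as a $z$-integral carrying $z^{x+n}$ together with a factor $z^{-k}$, and $\Psi_k$ as a $w$-integral carrying $w^{-(y+n)}$ together with a factor $w^{k}$, the sum over $k$ becomes geometric in $z/w$ and evaluates to the Cauchy factor $w/(w-z)$, while the weight and normalisation generating functions supply the remaining algebraic factor, reproducing the integrand of (\ref{2.7}). The ordering $1<r_2<r_1<1/q$ is forced at this stage: $r_2<r_1$ for convergence of the geometric series ($|z/w|<1$), $r_1<1/q$ to keep the pole of $(1-qw)^{-n}$ at $w=1/q$ outside $\gamma_{r_1}$, and $r_2>1$ so that the pole of $(1-z)^{-n}$ at $z=1$ sits on the correct side, which is the analytic counterpart of the cumulative-versus-tail passage noted above. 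The real difficulty is doing all of this consistently at once: choosing $\Phi_k,\Psi_k$ so that the partial Gram identity of the second step holds, verifying that the finite sum closes into the Cauchy factor with the finite-$n$ data correctly lodged in the prefactors $(1-z)^n$ and $(1-w)^n$, and keeping the three contour constraints and the shift by $n-1$ mutually compatible. Once these are in place the remaining steps are routine residue calculus.
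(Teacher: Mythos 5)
Your plan reproduces the architecture of the paper's proof --- contour-integral representations of the entries, a rank-$n$ biorthogonal pair $(\Phi_k,\Psi_k)$, the finite-rank identity converting an $n\times n$ Gram determinant into $\det(I-K_{m,n})$ on $\ell^2(\{\eta+1,\eta+2,\dots\})$, and a geometric resummation to (\ref{2.7}) with the ordering $1<r_2<r_1<1/q$. But everything you label ``the main obstacle'' is where the proof actually lives, and the one concrete Ansatz you offer fails. If the index $k$ enters only through pure powers ($z^{-k}$ on the $\Phi$ side, $w^{k}$ on the $\Psi$ side), the weight factors forced by (\ref{2.7}) are $F(z)=z^n(1-qz)^m/(1-z)^n$ and $G(w)=w^{-n}(1-w)^n/(1-qw)^n$, and the full-lattice pairing $\sum_{x\ge0}\Phi_k(x)\Psi_l(x)$ picks up from the residue at $w=z$ the coefficient of $z^{k-l}$ in $F(z)G(z)=(1-qz)^{m-n}$ (plus further contributions from $w=0$); this matrix is unipotent but \emph{not} diagonal as soon as $m>n$, so these families are not biorthogonal and the finite-rank identity you invoke does not apply to them. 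Moreover, a \emph{finite} geometric sum gives $\frac{w}{w-z}\bigl(1-(z/w)^n\bigr)$, not the Cauchy factor; only the infinite series produces $\frac{w}{w-z}$, which destroys the rank-$n$ structure (and with your factors $z^{-k},w^{k}$ the ratio is actually $w/z$, so your stated convergence condition $|z/w|<1$ points the wrong way). The parenthetical alternative via Proposition \ref{prop2.4} defers the same issue: orthogonal-polynomial theory yields the Christoffel--Darboux Meixner kernel, and identifying that with (\ref{2.7}) again needs integral representations you have not supplied.

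The paper resolves all three constraints at once by putting the index dependence into a cross-ratio rather than pure powers: with $K=m-n+1$ it takes $a_j(x)=\frac{q-1}{2\pi i}\int_{\gamma_{r_2}}z^{x-1}\frac{(qz-1)^{j+K-1}}{(z-1)^{j+1}}dz$ and $b_j(x)=\frac1{2\pi i}\int_{\gamma_{r_1}}\frac{(w-1)^j}{w^x(qw-1)^{j+K}}dw$. Then (i) biorthogonality $\sum_{y\ge0}a_j(y)b_k(y)=\delta_{jk}$ holds because the geometric series in $z/w$ --- which is a sum over the \emph{lattice variable}, not over the index, and is the actual place where $r_2<r_1$ is used --- reduces the pairing to $\frac{q-1}{2\pi i}\int_{\gamma_{r_2}}(qz-1)^{j-k-1}(z-1)^{k-j-1}dz$, which vanishes for $j<k$ by Cauchy's theorem and for $j>k$ after $z\mapsto1/z$; (ii) the kernel sum $\sum_{j=0}^{n-1}a_j(x+n)b_j(y+n)$ is geometric in $\frac{(w-1)(qz-1)}{(z-1)(qw-1)}$, and since this ratio minus one equals $\frac{(q-1)(w-z)}{(z-1)(qw-1)}$, the finite sum closes exactly into $\frac{w}{w-z}$ times the ratio of $n$-th powers in (\ref{2.7}), the leftover term dying in the $z$-integral by Cauchy's theorem because $r_1>r_2$; and (iii) the identification of (\ref{2.3}) with the Gram determinant $\det\bigl(\sum_{y=0}^{\eta+n}a_i(y)b_j(y)\bigr)$ is itself a substantive step, requiring the polynomiality of $a_j$ for $x\ge0$ (a residue at $z=1$, which is what forces $r_2>1$) for the row operations, and the explicit unipotent matrix $c_{j\ell}=\binom{n-\ell-1}{j-\ell}H(j-\ell)$ for the column operations. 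Your proposal correctly names these three requirements but constructs none of them; as it stands it is an outline of the paper's strategy with the decisive construction missing.
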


The proof will be given in the next section.

\begin{remark}\label{remark2.6}
\rm It would be interesting to understand the joint distribution of $G(m_i,n_i)$,
$i=1,\dots,p$ in order to understand the fluctuations of the ``last-passage times
surface'', $\mathbb{Z}_+^2\ni (m,n)\to(G(m,n)$. If $(m_1,n_1),\dots, (m_p,n_p)$ form
a right/down path then the joint distribution of $G(m_1,n_1),\dots, G(m_p,n_p)$
can be expressed as a Fredholm determinant, see \cite{JHou}, \cite{BoOl}, and it is
possible to investigate the asymptotic fluctuations. However, the asymptotic correlation
between for example $G(m,m)$ and $G(n,n)$, $m<n$, is not known and their is no nice expression
for their joint distribution. Using (\ref{2.2}) we can write down an expression for their joint
distribution, which was one of the motivations for the present work. We have
\begin{align}\label{2.9}
&\mathbb{P}[G(m,m)\le\eta_1, G(n,n)\le \eta_2]
\notag\\
&=\sum_{x\in W_n,\,\,x_m\le\eta_1}
\sum_{y\in W_n,\,\,y_n\le\eta_2}
\det(\Delta^{j-i}w_m(x_j))_{1\le i,j\le n}
\det(\Delta^{j-i}w_{n-m}(y_j-x_i))_{1\le i,j\le n}.
\end{align}
However, we have not been able to rewrite this in a form useful for asymptotic computations.
\it
\end{remark}

\begin{remark}\label{remark2.7} 
\rm The case when the $w(i,j)$'s are exponential random variables can be treated in a completely
analogous way or by taking the appropriate limit of the formula above, $q=1-\alpha/L$, $G(m,n)\to
G(m,n)/L$ and $l\to\infty$.
\it
\end{remark}

\begin{remark}\label{remark2.8}
\rm Random permutations can be obtained as a limit of the above model, 
$q=\alpha/n^2$, $n\to\infty$,
\cite{JDOPE}. The random variable $G(n,n)$ then converges to $L(\alpha)$ the Poissonized 
version of the length $\ell_N$ of a longest increasing subsequence of a random permutation
from $S_N$. We can take this limit in the formulas (\ref{2.7}) and (\ref{2.8}) and this leads
to a formula for $\mathbb{P}[L(\alpha)\le\eta]$ as a Fredholm determinant involving the
discrete Bessel kernel, \cite{JDOPE}, \cite{BOO}.
Hence, we obtain a new proof of this result which does not inolve some form of 
the RSK-correspondence.\it
\end{remark}

\section{Proofs}\label{sect3}

\subsection{Proofs of theorems \ref{thm2.1} and \ref{thm2.2}}\label{subsect3.1}

To prove theorem 2.1 we first consider the case $m-\ell=1$. The transition function from 
$\mathbf{G}(\ell)$ to $\mathbf{G}(m)$ is, by (\ref{1.6.2}),
\begin{equation}\label{3.1}
\mathbb{P}[\mathbf{G}(\ell+1)=y\,|\,\mathbf{G}(\ell)=x]=
\prod_{j=1}^n w(y_k-\max(x_k,y_{k-1})),
\end{equation}
where we have set $y_0=0$ and $x,y\in W_n$. Note also that it is clear from (\ref{1.6.2})
that $(\mathbf{G}(i))_{i\ge 0}$ is a Markov chain.
The right hand side can be written as a determinant by the following lemma.

\begin{lemma}\label{lem3.1}
If $x,y\in W_n$, then
\begin{equation}\label{3.2}
\prod_{j=1}^n w(y_k-\max(x_k,y_{k-1}))=\det(\Delta^{j-i}w(y_j-x_i))_{1\le i,j\le n}.
\end{equation}
\end{lemma}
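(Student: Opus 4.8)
The plan is to prove the identity by induction on $n$, exploiting the fact that the top-left $(n-1)\times(n-1)$ block of the matrix $(\Delta^{j-i}w(y_j-x_i))_{1\le i,j\le n}$ is itself the matrix for the $(n-1)$-case, built from $(x_1,\dots,x_{n-1})$ and $(y_1,\dots,y_{n-1})$. On the product side the factor $w(y_k-\max(x_k,y_{k-1}))$ with $k=n$ splits off at once, so the whole statement reduces to the claim that
\begin{equation}
\det(\Delta^{j-i}w(y_j-x_i))_{1\le i,j\le n}=w(y_n-\max(x_n,y_{n-1}))\det(\Delta^{j-i}w(y_j-x_i))_{1\le i,j\le n-1}.
\notag
\end{equation}
The base case $n=1$ is the identity $w(y_1-x_1)=w(y_1-\max(x_1,0))$, which holds because the states of $\mathbf{G}$ satisfy $x_1\ge 0$ (and $y_0=0$).

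Before the inductive step I would record two elementary facts about $w(x)=(1-q)q^xH(x)$. First, $\Delta^{-m}w(z)=0$ for every $z\le 0$ and $m\ge1$: the single antidifference $\Delta^{-1}w(z)=\sum_{y<z}w(y)$ vanishes for $z\le 0$ since $w$ is supported on $\{y\ge 0\}$, and the claim follows by iteration. Second, in the ``bulk'' one has $\Delta^kw(z)=(-(1-q))^k(1-q)q^z$ for $z\ge 0$, $k\ge 0$, whereas the single antidifference carries a boundary correction, $\Delta^{-1}w(z)=1-q^z$ for $z\ge 0$. These two formulas are exactly the input needed to collapse the determinant.

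The inductive step splits into two cases according to the value of $\max(x_n,y_{n-1})$. If $x_n\ge y_{n-1}$, then $y_j-x_n\le 0$ for all $j\le n-1$, so the first fact makes the entire last row vanish except for the corner entry $w(y_n-x_n)=w(y_n-\max(x_n,y_{n-1}))$; expanding along the last row gives the factorization immediately, the surviving minor being the $(n-1)\times(n-1)$ determinant. If $x_n<y_{n-1}$, then $x_i\le x_n<y_{n-1}\le y_n$ forces all arguments appearing in columns $n-1$ and $n$ to be strictly positive, so the bulk formula applies to every entry of column $n$ and to the entries of column $n-1$ in rows $i\le n-1$; comparing these shows that, in rows $i\le n-1$, column $n$ equals $-(1-q)q^{y_n-y_{n-1}}$ times column $n-1$. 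Subtracting this multiple of column $n-1$ from column $n$ annihilates those entries, and the corrected corner becomes $(1-q)q^{y_n-x_n}+(1-q)q^{y_n-y_{n-1}}(1-q^{y_{n-1}-x_n})=(1-q)q^{y_n-y_{n-1}}=w(y_n-y_{n-1})$, where I used the boundary-corrected value of $\Delta^{-1}w(y_{n-1}-x_n)$. Expanding along the new last column again yields the factorization.

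The routine verifications aside, the only real subtlety is organizing the two cases so that the Heaviside cut-off always falls on the favourable side: the heart of the matter is that the boundary correction $1-q^z$ in $\Delta^{-1}w$ is precisely what makes the single column operation telescope to the clean factor $w(y_n-y_{n-1})$ in the second case, while the vanishing of $\Delta^{-m}w$ below the origin trivializes the first. I would therefore expect the main bookkeeping effort to lie in checking these sign and support conditions carefully, after which the induction closes at once, since in both cases the surviving minor is exactly the $(n-1)\times(n-1)$ determinant covered by the inductive hypothesis.
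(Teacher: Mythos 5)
Your proof is correct, and although it shares the paper's overall skeleton (induction on $n$, with the same dichotomy $x_n\ge y_{n-1}$ versus $x_n<y_{n-1}$), the way you collapse the determinant is genuinely different. The paper expands along the last row, producing $n$ cofactor terms, and must then prove that the terms $k=1,\dots,n-2$ all vanish: when $y_k<x_n$ the entry $\Delta^{k-n}w(y_k-x_n)$ is zero, and when $y_k\ge x_n$ the complementary minor is shown to vanish by rewriting it as $\Delta_{y_{k+1}}\cdots\Delta_{y_n}$ applied to an $(n-1)\times(n-1)$ determinant, invoking the induction hypothesis there, and letting the surplus difference operators annihilate the telescoped factor $q^{y_n}$; the two surviving terms are then recombined algebraically, which uses the induction hypothesis on two different configurations, $(y_1,\dots,y_{n-1})$ and $(y_1,\dots,y_{n-2},y_n)$. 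You bypass all of this: in your first case the support fact $\Delta^{-m}w(z)=0$ for $z\le 0$, $m\ge 1$, kills the entire last row except the corner, and in your second case the observation that rows $1,\dots,n-1$ of column $n$ equal $(q-1)q^{y_n-y_{n-1}}$ times those of column $n-1$ lets a single column operation do the work, the corrected corner telescoping to $w(y_n-y_{n-1})$ exactly because of the boundary term $1-q^z$ in $\Delta^{-1}w$. Your route is shorter, invokes the induction hypothesis only once (for the leading principal minor), and exploits the geometric form of $w$ more directly through column proportionality; the paper's row expansion is the more systematic manipulation (of the same genre as the summation-by-parts column reductions in its Lemma 3.2), but it pays for that with the extra minor-vanishing argument. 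A small additional merit of your write-up: you make explicit the hypothesis $x_1\ge 0$ needed for the $n=1$ base case under the convention $y_0=0$ (harmless, since the lemma is only applied to the nonnegative states of $\mathbf{G}$), a point the paper passes over as ``trivial''.
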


\begin{proof}
We use induction with respect to $n$. the claim is trivial for $n=1$. Assume that it is true
up to $n-1$. Expand the determinant in (\ref{3.2}) along the last row,
\begin{align}\label{3.3}
&\det(\Delta^{j-i}w(y_j-x_i))_{1\le i,j\le n}=
\notag\\
&\sum_{k=1}^{n-2}(-1)^{k+n}\Delta^{k-n}w(y_k-x_n)\det(\Delta^{j-i}w(y_j-x_i))_{1\le i,j\le n-1,
j\neq k}
\notag\\
&-\Delta^{-1}w(y_{n-1}-x_n)\det(\det(\Delta^{j-i}w(y_j-x_i))_{1\le i,j\le n-1,
j\neq n-1}
\notag\\
&+w(y_n-x_n)\det(\Delta^{j-i}w(y_j-x_i))_{1\le i,j\le n-1}.
\end{align}
We will first show that each term in the sum from $k=1$ to $n-2$ in the right hand side
of (\ref{3.3}) is zero. Let $\Delta_y$ denote the difference operator with respect to the
variable $y$. The fact that $\Delta(\Delta^{-1}w)=w$, then gives
\begin{equation}
\det(\Delta^{j-i}w(y_j-x_i))_{1\le i,j\le n-1,j\neq k}=
\Delta_{y_{k+1}}\dots\Delta_{y_n}
\det(\Delta^{j-i}w(\tilde{y}_j-x_i))_{1\le i,j\le n-1},
\notag
\end{equation}
where $\tilde{y}_j=y_j$ if $1\le j\le k-1$, $\tilde{y}_j=y_{j+1}$ if $k\le n-1$.
By the induction asumption this equals
\begin{equation}
\Delta_{y_{k+1}}\dots\Delta_{y_n}
\prod_{j=1}^{k-1}w(y_j-\max (x_j,y_{j-1}))w(y_{k+1}-\max(x_k,y_{k-1}))\prod_{j=k+1}^{n-1}
w(y_{j+1}-\max(x_j,y_j)).
\notag
\end{equation}
If $y_k<x_n$, then $\Delta^{k-n}w(y_k-x_n)=0$ since $\Delta^{-j}w(x)=0$ if $x<j$. 
In this case the $k\,$'th term in the sum in (\ref{3.3}) is zero. Assume that $y_k\ge x_n$. 
Then $y_n\ge \dots\ge y_k\ge x_n$ and we obtain
\begin{align}
&\det(\Delta^{j-i}w(y_j-x_i))_{1\le i,j\le n-1,j\neq k}
\notag\\
&=\prod_{j=1}^{k-1}w(y_j-\max(x_j,y_{j-1}))(1-q)q^{-\max(x_k,y_{k-1})}
\Delta_{y_{k+1}}\dots\Delta_{y_n}
\prod_{j=k+1}^{n-1}(1-q)q^{y_{j+1}-y_j}.
\notag
\end{align}
But,
\begin{equation}
\Delta_{y_{k+1}}\dots\Delta_{y_n}
\prod_{j=k+1}^{n-1}(1-q)q^{y_{j+1}-y_j}
=(1-q)^{n-k-1}\Delta_{y_{k+1}}\dots\Delta_{y_n} q^{y_n}=0
\notag
\end{equation}
since $k+1<n$. Hence, each term in the sum from $k=1$ to $n-2$ in (\ref{3.3}) is zero and we
obtain
\begin{align}\label{3.4}
&\det(\Delta^{j-i}w(y_j-x_i))_{1\le i,j\le n}
\notag\\
&=-\Delta^{-1}w(y_{n-1}-x_n)\det(\Delta^{j-i}w(y_j-x_i))_{1\le i,j\le n-1,
j\neq n-1}
\notag\\
&+w(y_n-x_n)\det(\Delta^{j-i}w(y_j-x_i))_{1\le i,j\le n-1}.
\end{align}
If $y_{n-1}<x_n$, then $\Delta^{-1}w(y_{n-1}-x_n)=0$ and the right hand side of (\ref{3.4})
is
\begin{equation}
w(y_n-x_n)\prod_{j=1}^{n-1}w(y_j-\max(x_j,y_{j-1})),
\notag
\end{equation}
by the induction assumption. Since $y_{n-1}<x_n$, $w(y_n-x_n)=w(y_n-\max(x_n,y_{n-1})$
and we get exactly the right hand side of (\ref{3.2}).

Assume now that $y_{n-1}\ge x_n$. Note that $\Delta^j w(x)=(q-1)\Delta^{j-1}w(x)$ if
$j\ge 1$ and $x\ge 0$. The determinant
\begin{equation}
\det(\Delta^{j-i}w(y_j-x_i))_{1\le i\le n-1, j\neq n-1}
\notag
\end{equation}
has $\Delta^{n-i}w(y_n-x_i)=(q-1)\Delta^{n-1-i}w(y_n-x_i)$, $1\le i<n$, in the last
column, since $y_n\ge y_{n-1}\ge x_n\ge\dots\ge x_1$.
By the induction assumption this determinant equals
\begin{equation}
(q-1)\prod_{j=1}^{n-2} w(y_j-\max(x_j,y_{j-1}))w(y_n-\max(x_{n-1}, y_{n-2})).
\notag
\end{equation}
Thus, the right hand side of (\ref{3.4}) equals
\begin{align}\label{3.5}
&-(1-q)(q^{y_{n-1}-x_n}-1)\prod_{j=1}^{n-2} w(y_j-\max(x_j,y_{j-1}))w(y_n-\max(x_{n-1},
y_{n-2}))
\notag\\
&+w(x_n-y_n)\prod_{j=1}^{n-1}w(y_j-\max(x_j,y_{j-1})),
\end{align}
where we have used the fact that
$(\Delta^{-1}w)(y_{n-1}-x_n)(q-1)=(1-q)(q^{y_{n-1}-x_n}-1)$
for $y_{n-1}-x_n\ge 0$. since $y_n\ge y_{n-1}\ge x_n\ge\dots\ge x_1$, the expression in
(\ref{3.5}) can be written
\begin{align}
&(1-q)^2\left[-(q^{y_n-x_n-1}-1)q^{y_n-\max(x_{n-1},y_{n-2})}+
q^{y_n-x_n}q^{y_{n-1}-\max(x_{n-1},y_{n-2})}\right]
\notag\\
&\times\prod_{j=1}^{n-2}w(y_j-\max(x_j,y_{j-1}))
\notag\\
&=(1-q)q^{y_n-y_{n-1}}\prod_{j=1}^{n-1}w(y_j-\max(x_j,y_{j-1}))
=\prod_{j=1}^{n}w(y_j-\max(x_j,y_{j-1})).
\notag
\end{align}
\end{proof}

Theorem \ref{thm2.1} follows from lemma \ref{lem3.1} and the following convolution type
formula for determinants of the form we have.

\begin{lemma}\label{lem3.2}
Assume that $f,g:\mathbb{Z}\to\mathbb{C}$ are such that $f(x)=g(x)=0$ if $x<M$ for some
$M\in\mathbb{Z}$. Then,
\begin{equation}\label{3.6}
\sum_{y\in W_n}\det(\Delta^{j-i}f(y_j-x_i))\det(\Delta^{j-i}g(z_j-y_i))
=\det((\Delta^{j-i}f\ast g)(z_j-x_i)),
\end{equation}
where all determinants are $n\times n$.
\end{lemma}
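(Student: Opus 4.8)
The plan is to reduce (\ref{3.6}) to the generalized Cauchy--Binet identity (\ref{2.5}) after rewriting the two determinants in a form to which that identity applies. Since the summand on the left of (\ref{3.6}) is symmetric under permutations of $(y_1,\dots,y_n)$ and vanishes whenever two of the $y_i$ coincide (two columns then agree in each determinant), I would first replace $\sum_{y\in W_n}$ by $\frac{1}{n!}\sum_{y\in\mathbb{Z}^n}$, which removes the ordering constraint. In the same way I note that any identity I prove for the unrestricted sum transfers back to $W_n$.

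Next I would produce the right-hand side of (\ref{3.6}) directly. Using $\Delta(f\ast g)=(\Delta f)\ast g=f\ast(\Delta g)$ together with the definition of convolution, one gets the pointwise splitting $(\Delta^{j-i}(f\ast g))(z_j-x_i)=\sum_{k\in\mathbb{Z}}P_i(k)Q_j(k)$, where $P_i(k)=(\Delta^{-i}f)(k-x_i)$ depends only on the row index and $Q_j(k)=(\Delta^{j}g)(z_j-k)$ only on the column index (note $-i+j=j-i$). Thus the matrix $(\Delta^{j-i}(f\ast g)(z_j-x_i))$ is a Gram matrix for counting measure on $\mathbb{Z}$, and (\ref{2.5}) gives $\det((\Delta^{j-i}(f\ast g))(z_j-x_i))=\frac{1}{n!}\sum_{y\in\mathbb{Z}^n}\det(P_i(y_l))_{i,l}\det(Q_j(y_l))_{j,l}$.

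It remains to match this with the symmetrized left-hand side, and here lies the structural obstacle: the powers $\Delta^{j-i}$ couple the row and column indices, so the determinants in (\ref{3.6}) are \emph{not} themselves of Gram form, and the splitting above necessarily lands on $P_i,Q_j$ rather than on $\Delta^{j-i}f(y_j-x_i)$ and $\Delta^{j-i}g(z_j-y_i)$. To bridge this I would use an elementary operator identity: letting $\Delta_{y_l}$ denote the forward difference in the single variable $y_l$, one has $\Delta^{j-i}f(y_j-x_i)=\Delta_{y_j}^{\,j}P_i(y_j)$, and expanding the determinant over permutations shows that $\Delta_{y_l}^{\,l}$ acts on exactly the entry sitting in column $l$; hence $\det(\Delta^{j-i}f(y_j-x_i))=\bigl(\prod_{l}\Delta_{y_l}^{\,l}\bigr)\det(P_i(y_l))$. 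Transposing the second determinant and arguing analogously --- now with backward differences $\nabla_{y_l}$, since $y_i$ enters $g$ with a minus sign, and with anti-differences in place of differences --- yields $\det(\Delta^{j-i}g(z_j-y_i))=(-1)^{n(n+1)/2}\bigl(\prod_{l}\nabla_{y_l}^{-l}\bigr)\det(Q_j(y_l))$.

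Finally I would carry out summation by parts in each variable $y_l$ separately. Since $\Delta_{y_l}$ and $-\nabla_{y_l}$ are adjoint for summation over $\mathbb{Z}$, each of the $\prod_l\Delta_{y_l}^{\,l}$ operators transfers onto the second factor, cancels the $\nabla_{y_l}^{-l}$ there, and contributes a sign $(-1)^{l}$; the product of these signs cancels the prefactor $(-1)^{n(n+1)/2}$, leaving exactly $\frac{1}{n!}\sum_{y\in\mathbb{Z}^n}\det(P_i(y_l))\det(Q_j(y_l))$, which equals the right-hand side of (\ref{3.6}) by the second step. The part I expect to require the most care is the vanishing of the boundary terms in these summations by parts. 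At $-\infty$ this is immediate from the hypothesis $f(x)=g(x)=0$ for $x<M$, which forces every anti-difference $\Delta^{-i}f$ to vanish there; at $+\infty$ the anti-differences tend only to the (constant) total masses, so one must use the antisymmetry of the determinants --- a column that becomes constant in the summation variable renders the surviving boundary determinant degenerate --- together with the standing convergence assumption on the sums to see that these contributions drop out.
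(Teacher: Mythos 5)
Your opening step is where the proof breaks, and it is not a repairable technicality. You replace $\sum_{y\in W_n}$ by $\frac 1{n!}\sum_{y\in\mathbb{Z}^n}$ on the grounds that the summand in (\ref{3.6}) is symmetric in $(y_1,\dots,y_n)$ and vanishes whenever two $y_i$ coincide. Neither claim is true, and for exactly the reason you yourself identify two paragraphs later: the exponent $j-i$ ties the variable $y_j$ to the \emph{column index} $j$. Permuting the $y_j$'s does not permute columns of $\det(\Delta^{j-i}f(y_j-x_i))$ (the arguments move but the exponents stay put), and when $y_1=y_2$ the two relevant columns are $(\Delta^{1-i}f(y_1-x_i))_i$ and $(\Delta^{2-i}f(y_1-x_i))_i$, which are different. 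Concretely, take $n=2$, $x_1=x_2=0$, $y_1=y_2=0$ and any $f$ supported in $\{x\ge 0\}$ with $f(0)\neq 0$: then $\Delta^{-1}f(0)=0$, so $\det(\Delta^{j-i}f(y_j-x_i))=f(0)^2\neq 0$, contradicting the claimed vanishing on the diagonal (and diagonal points lie in $W_n$, so they cannot be ignored in the $1/n!$ accounting). Likewise, with $f=g=\mathbf{1}_{\{0\}}$ and $x=z=(0,0)$, the summand equals $1$ at $y=(1,-1)$ but $0$ at $y=(-1,1)$, so it is not symmetric. Handling the restriction to $W_n$ is precisely the hard part of the lemma; the paper does it by summing by parts \emph{inside} $W_n$, variable by variable (summing $y_l$ from $y_{l-1}$ to $\infty$), showing that the internal boundary terms at $y_l=y_{l-1}$ produce determinants with two equal columns, hence vanish, and only then invoking Cauchy--Binet. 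That mechanism, or a substitute for it, is absent from your argument.

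What your steps 2--4 do establish (essentially correctly) is the \emph{unrestricted} identity $\frac 1{n!}\sum_{y\in\mathbb{Z}^n}\det(\Delta^{j-i}f(y_j-x_i))\det(\Delta^{j-i}g(z_j-y_i))=\det(\Delta^{j-i}(f\ast g)(z_j-x_i))$. This statement is in fact true, and has an even shorter proof: expand both determinants over permutations $\sigma,\tau$; for fixed $\sigma,\tau$ the sum over $y$ factorizes, each factor being a finite sum, and $\sum_{y\in\mathbb{Z}}\Delta^{l-\sigma(l)}f(y-x_{\sigma(l)})\,\Delta^{\tau(l)-l}g(z_{\tau(l)}-y)=\Delta^{\tau(l)-\sigma(l)}(f\ast g)(z_{\tau(l)}-x_{\sigma(l)})$, so the column index $l$ cancels out of the exponent and collecting the $n!$ pairs with $\tau\sigma^{-1}$ fixed gives $n!$ times the right-hand side. (Your boundary-term discussion also needs repair, though it is fixable: the terms at $+\infty$ vanish not because a limiting determinant is ``degenerate'' but because the anti-differences should be realized in the argument of $g$, as $\Delta^{i-j}g(z_i-y_l)$, and all anti-differences of $g$ vanish identically once $z_i-y_l<M$.) The essential point, however, is that the unrestricted identity is a \emph{different} statement from (\ref{3.6}): deducing (\ref{3.6}) from it requires exactly the equality $\sum_{y\in W_n}=\frac 1{n!}\sum_{y\in\mathbb{Z}^n}$ that your first step asserted on false grounds. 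That equality of sums does happen to be true, but only as a consequence of the lemma together with the unrestricted identity, so appealing to it here would be circular. To complete a proof along your lines you must confront the ordering constraint $y\in W_n$ directly, as the paper's proof does.
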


\begin{proof}
We will make use of the following summation by parts formula
\begin{equation}\label{3.7}
\sum_{y=a}^b\Delta u(y-x)v(z-y)=\sum_{y=a}^b u(y-x)\Delta v(z-y)
+u(b+1-x)v(z-b)-u(a-x)v(z+1-a).
\end{equation}
The first step is to show that
\begin{align}\label{3.8}
&\sum_{y\in W_n}\det(\Delta^{j-i}f(y_j-x_i))\det(\Delta^{j-i}g(z_j-y_i))
\notag\\
&=\sum_{y\in W_n}\det(\Delta^{1-i}f(y_j-x_i))\det(\Delta^{i-1}g(z_i-y_j))
\end{align}
by repeated summation by parts. The left hand side of (\ref{3.8}) can be written
\begin{align}
&\sum_{y\in W_n}\Delta_{y_n}\det(\Delta^{1-i}f(y_1-x_i)\dots \Delta^{n-1-i}f(y_{n-1}-x_i)
\,\,\,\Delta^{n-1-i}f(y_{n}-x_i))
\notag\\
&\times\det(\Delta^{i-1}g(z_i-y_1)\dots\Delta^{i-n}g(z_i-y_n)).
\notag
\end{align}
Here we have used the summation by parts formula (\ref{3.7}) to sum $y_n$ between $y_{n-1}$
and $\infty$. The terms coming from $u(b+1-x)v(z-b)-u(a-x)v(z+1-a)$ in (\ref{3.7}) with
$b\to\infty$ and $a=y_{n-1}$ give $0$ since $\Delta^{i-n}g(z_i-b)=0$ if $b$ is 
large enough and the other term gives rise to a determinant with two equal columns containing
$\Delta^{n-1-i}f(y_{n-1}-x_i)$. The result is
\begin{align}
&\sum_{y\in W_n}\det(\Delta^{1-i}f(y_1-x_i)\dots \Delta^{n-1-i}f(y_{n-1}-x_i)
\,\,\,\,\Delta^{n-1-i}f(y_{n}-x_i))
\notag\\
&\times\det(\Delta^{i-1}g(z_i-y_1)\dots\Delta^{i-n+1}g(z_i-y_{n-1})\,\,\,\,
\Delta^{i-n+1}g(z_i-y_{n})).
\notag
\end{align}
We can now repeat this procedure with $y_{n-1}, y_{n-2},\dots,y_2$, which gives
\begin{align}
&\sum_{y\in W_n}\det(\Delta^{1-i}f(y_1-x_i)\,\,\,\Delta^{1-i}f(y_2-x_i)
\,\,\,\Delta^{2-i}f(y_3-x_i)\dots
\,\,\,\Delta^{n-1-i}f(y_{n}-x_i))
\notag\\
&\times\det(\Delta^{i-1}g(z_i-y_1)\,\,\,\Delta^{i-1}g(z_i-y_2)
\,\,\,\Delta^{i-2}g(z_i-y_3)\dots
\,\,\,\Delta^{i-n+1}g(z_i-y_{n})).
\notag
\end{align}
Again we repeat the summation by parts procedure with $y_n,\dots,y_3$, 
then with $y_n,\dots,y_4$ and so on until we get the right hand side of (\ref{3.8}).

Next, we apply the generalized Cauchy-Binet identity (\ref{2.5}) to the 
right hand side of (\ref{3.8}). This gives
\begin{equation}
\det(\sum_{y\in\mathbb{Z}}\Delta^{1-i}f(y-x_i)\Delta^{j-1}g(z_j-y)).
\notag
\end{equation}
To prove the lemma it remains to show that
\begin{equation}
\sum_{y\in\mathbb{Z}}\Delta^{1-i}f(y-x)\Delta^{j-1}g(z-y)=
\Delta^{j-i}(f\ast g)(z-x).
\notag
\end{equation}
If we set $h(x)=H(x-1)$, then $\Delta^{-1}f(x)=h\ast f(x)$ and hence
\begin{align}
&\sum_{y\in\mathbb{Z}}\Delta^{1-i}f(y-x)\Delta^{j-1}g(z-y)=
\Delta_z^{j-1}(h^{\ast(i-1)}\ast f)\ast g(z-x)
\notag\\
&=\Delta_z^{j-1}(\Delta^{1-i}(f\ast g))(z-x)=\Delta^{j-i}(f\ast g)(z-x).
\notag
\end{align}
Here $h^{\ast j}$ denotes the $j$-fold convolution of $h$ with itself.
\end{proof}

To prove theorem \ref{thm2.2} we note taht by theorem \ref{thm2.1}
\begin{align}
\mathbb{P}[G(m,n)\le\eta]&=\sum_{x_1\le \dots\le x_n\le \eta}\det(\Delta^{j-i}w_m(x_j))
\notag\\
&=\sum_{x_1\le \dots\le x_{n-1}\le \eta}\,\,\,
\sum_{x_n=x_{n-1}}^{\eta}\det(\Delta^{j-i}w_m(x_j)).
\notag
\end{align}
Now,
\begin{align}
&\sum_{x_n=x_{n-1}}^{\eta}\det(\Delta^{j-i}w_m(x_j))
\notag\\
&=\det(\Delta^{1-i}w_m(x_1)\dots\Delta^{n-1-i}w_m(x_{n-1})\,\,\,
\Delta^{n-1-i}w_m(\eta+1)-\Delta^{n-1-i}w_m(x_{n-1}))
\notag\\
&=\det(\Delta^{1-i}w_m(x_1)\dots\Delta^{n-1-i}w_m(x_{n-1})\,\,\,
\Delta^{n-1-i}w_m(\eta+1)).
\notag
\end{align}
Repeated use of this argument proves theorem \ref{thm2.2}.

\subsection{Proofs of propositions \ref{prop2.4} and \ref{prop2.5}}\label{subsect3.2}

The generating function for $w(x)$ is
\begin{equation}
\sum_{x\in\mathbb{Z}} w(x)z^x=\frac{1-q}{1-qz}
\notag
\end{equation}
and hence, since $w_m$ is the $m$-fold convolution of $w$ with itself,
\begin{equation}
w_m(x)=\frac{(1-q)^m}{2\pi i}\int_{\gamma_r}\frac{dz}{(1-qz)^mz^{x+1}},
\notag
\end{equation}
where the radius $r$ of the circle $\gamma_r$, centered at the origin, satisfies $0<r<1/q$.

It follows that
\begin{equation}\label{3.9}
\Delta^kw_m(x)=\frac{(1-q)^m}{2\pi i}\int_{\gamma_r}\frac{(1-z)^k}{(1-qz)^mz^{x+k+1}}dz,
\end{equation}
for $k\ge 0$ (and also for $k<0$ if $r<1$).

As noted above, if $h(x)=H(x-1)$, then $\Delta^{-1}f(x)=h\ast f(x)$ and hence
$\Delta^{-k}f(x)=(h^{\ast k}\ast f)(x)$. Using e.g. generating functions it is not
difficult to see that
\begin{equation}\label{3.10}
h^{\ast k}(x)=\frac{(x-1)^{[k-1]}}{(k-1)!}H(x-k),
\end{equation}
where $y^{[k]}=y(y-1)\dots(y-k+1)$ is the factorial power. We can write
\begin{equation}
\Delta^{j-i-1}w_m(x)=\Delta^{-i}(\Delta^{j-1}w_m)(x)=\sum_{y\in\mathbb{Z}}h^{\ast i}(x-y)
\Delta^{j-1}w_m(y).
\notag
\end{equation}
Note that $h^{\ast i}(x-y)=0$ if $x-y<i$ by (\ref{3.10}) and hence $h^{\ast i}(x-y)=0$ if $y>x-1$ for
any $i\ge 1$. We obtain
\begin{equation}\label{3.11}
\Delta^{j-i-1}w_m(\eta+1)=\sum_{y=-\infty}^\eta \frac{(\eta-y)^{[i-1]}}{(i-1)!}\Delta^{j-1}w_m(y).
\end{equation}
Fix $L\ge n-1$. By (\ref{3.11}) and some row operations we find
\begin{align}\label{3.12}
\det(\Delta^{j-i-1}w_m(\eta+1))_{1\le i,j\le n}&=\det(\sum_{y=-\infty}^\eta \frac{(y+L)^{i-1}}{(i-1)!}
(-1)^{i-1}\Delta^{j-1}w_m(y))_{1\le i,j\le n}
\notag\\
&=\det(\sum_{y=-L}^\eta \frac{(y+L)^{i-1}}{(i-1)!})(-1)^{j-1}\Delta^{j-1}w_m(y))_{1\le i,j\le n},
\end{align}
since it follows from (\ref{3.9}) that $\Delta^{j-1}w_m(y)=0$ if $y\le -n$ for $1\le j\le n$.

If we choose $L=n-1$ it follows from theorem \ref{thm2.2}, (\ref{3.12}) and the generalized
Cauchy-Binet identity (\ref{2.5}) that
\begin{equation}\label{3.13}
\mathbb{P}[G(m,n)\le\eta]=\prod_{j=1}^{n-1}\frac 1{j!}
\sum_{y_1,\dots,y_n=0}^{\eta+n-1}\Delta_n(y)\det((-1)^{j-1}\Delta^{j-1}w_m(y_i-n+1))_{1\le i,j\le n}.
\end{equation}
If $0\le k<n$, $m\ge n$, then
\begin{equation}\label{3.14}
(-1)^k\Delta^kw_m(y)=s_k(y)\prod_{\ell=k+1}^{m-1}(y+\ell)q^yH(y+n-1),
\end{equation}
where $s_k$ is a polynomial of degree $k$. To see this we can use (\ref{3.9}). We see that the 
integral in (\ref{3.9}) is zero if $x\le -(k+1)$, and in particular if $x\le -n$ for all $k$,
$0\le k<n$. If we make the change of variables $z\to 1/z$ and assume that $x\ge 1-m$, we find
\begin{align}
\Delta^kw_m(y)&=\frac{(1-q)^m}{2\pi i}\int_{\gamma_{1/r}}\frac{(z-1)^k}{(z-q)^m}z^{y+m-1}dz
\notag\\
&=\sum_{r=0}^{y+m-1}\binom{y+m-1}{r}\frac{(1-q)^m}{2\pi i}\int_{\gamma_{1/r}}
(z-1)^k(z-q)^{r-m}q^{y+m-1-r}dz.
\notag
\end{align}
This is a polynomial of degree $m-1$ in $y$ times $q^y$, and this polynomial has zeros at 
$-(k+1),\dots,1-m$. Consequently (\ref{3.14}) follows.

Furthermore we have the following determinantal identity. Let $p_j$, $j=0,\dots,n-1$ be 
polynomials 
of degree $j$ and $A_2,\dots, A_{n-1}$ constants. Then there is a constant $B$ such that
\begin{equation}\label{3.15}
\det(p_{j-1}(x_i)\prod_{k=j+1}^n(x_i+A_k))_{1\le i,j\le n}=B\Delta_n(x).
\end{equation}
This is not hard to see. Choose $c_{rj}$ so that
\begin{equation}
\sum_{r=1}^nx^{r-1}c_{rj}=p_{j-1}(x)\prod_{k=j+1}^n(x+A_k).
\notag
\end{equation}
Then, the left hand side of (\ref{3.15}) is
\begin{equation}
\det(\sum_{r=1}^nx_i^{r-1}c_{rj})_{1\le i,j\le n}=\Delta_n(x)\det C,
\notag
\end{equation}
and we have proved (\ref{3.15}) with $B=\det C$. Actually, according to \cite{Kr} we have
\begin{equation}
B=\prod_{j=1}^n(-1)^{j-1}p_{j-1}(-A_j),
\notag
\end{equation}
but we will not need this result.

By (\ref{3.13}), (\ref{3.14}) and (\ref{3.15}) we obtain
\begin{align}
\mathbb{P}[G(m,n)\le\eta]&=C_{m,n}\sum_{y_1,\dots,y_n=0}^{\eta+n-1}\Delta_n(y)^2
\prod_{j=1}^n q^{y_j}\prod_{k=n}^{m-1}(y_j+k+1-n)
\notag\\
&=\frac 1{Z_{m,n}}\sum_{y_1,\dots,y_n=0}^{\eta+n-1}\Delta_n(y)^2
\prod_{j=1}^n \binom{y_j+m-n}{y_j}q^{y_j},
\notag
\end{align}
for some constants $C_{m,n}$, $Z_{m,n}$. If we let $\eta\to\infty$ we see that $Z_{m,n}$ must 
be exactly the normalization constant in the Meixner ensemble. This proves proposition
\ref{prop2.4}.

We now turn to the proof of proposition \ref{prop2.5}. Write $K=m-n+1$ and define, for
$0\le j<n$, $x\in\mathbb{Z}$,
\begin{equation}
a_j(x)=\frac{q-1}{2\pi i}\int_{\gamma_{r_2}} z^{x-1}\frac{(qz-1)^{j+K-1}}{(z-1)^{j+1}}dz
\notag
\end{equation}
and
\begin{equation}
b_j(x)=\frac{1}{2\pi i}\int_{\gamma_{r_1}} \frac{(w-1)^j}{w^x(qw-1)^{j+k}}dw,
\notag
\end{equation}
where $1<r_2<r_1<1/q$. If $x\ge 0$, $0\le j<n$, then
\begin{equation}\label{3.16}
a_j(x)=\frac{(q-1)^{j+K}}{j!} p_j(x-n),
\end{equation}
where $p_j$ is a monic polynomial of degree 1. This follows from the computation
\begin{align}
a_j(x+n)&=\frac{q-1}{2\pi i}\int_{\gamma_{r_2}} z^{x+n-1}\frac{(qz-1)^{j+K-1}}{(z-1)^{j+1}}dz
\notag\\
&=\frac{q-1}{2\pi i}\int_{\gamma_{r_2}} \sum_{r=0}^{x+n-1}\binom{x+n-1}{r}(z-1)^{r-j-1}(qz-1)
^{j+K-1}dz
\notag\\
&=\frac{q-1}{2\pi i}\int_{\gamma_{r_2}} \sum_{r=0}^j\binom{x+n-1}{r}(z-1)^{r-j-1}
(qz-1)^{j+K-1}dz.
\notag
\end{align}
We see that this is a polynomial of degree $j$ in $x$ with leading coefficient 
\newline 
$(q-1)^{j+K}/j!$.

It follows from theorem \ref{thm2.2} and (\ref{3.12}) with $L=-n$ that
\begin{equation}\label{3.17}
\mathbb{P}[G(m,n)\le\eta]=\det\left(\sum_{y=0}^{\eta+n}(-1)^j\Delta^jw_m(y-n)\right)_{0\le i,j<n},
\end{equation}
where, by (\ref{3.9}),
\begin{equation}\label{3.18}
(-1)^j\Delta^jw_m(y-n)=\frac{(1-q)^m}{2\pi i}\int_{\gamma_{r_1}}\frac{(z-1)^j}{(1-qz)^{n+K-1}
z^{y+j-n+1}}dz.
\end{equation}
Set
\begin{equation}
c_{j\ell}=\binom{n-\ell-1}{j-\ell}H(j-\ell).
\notag
\end{equation}
Then,
\begin{equation}\label{3.19}
\sum_{j=0}^{n-1}\frac{(-1)^j}{(q-1)^{j+K}}\Delta^jw_m(y-n)c_{j\ell}=b_\ell(y).
\end{equation}
To show this it is sufficient to show that
\begin{equation}\label{3.20}
\sum_{j=0}^{n-1}\frac{(1-q)^{m-j-K}(-1)^{j+K}(z-1)^j}{(1-qz)^{n+K-1}z^{y+j-n+1}}c_{j\ell}
=\frac{(z-1)^\ell}{(qz-1)^{\ell+K}z^y}
\end{equation}
by (\ref{3.18}) and the definition of $b_\ell$. The identity (\ref{3.20}) can be rewritten as
\begin{equation}\label{3.21}
\sum_{j=0}^{n-1}(1-\frac 1z)^j(1-q)^{m-j-K}(-1)^{j+K}c_{j\ell}=
(-1)^{K}(1-qz)^{n-\ell-1}(1-z)^\ell \frac 1{z^{n-1}}.
\end{equation}
Set $w=1-1/z$. Then (\ref{3.21}) is equivalent to
\begin{equation}
\sum_{j=0}^{n-1}w^j(1-q)^{n-j-1}(-1)^jc_{j\ell}=(1-q-w)^{n-\ell-1}w^\ell(-1)^\ell.
\notag
\end{equation}
By the binomial theorem
\begin{align}
(1-q-w)^{n-\ell-1}w^\ell(-1)^\ell&=\sum_{r=0}^{n-\ell-1}\binom{n-\ell-1}r (-w)^{r+\ell}
(1-q)^{n-\ell-1-r}
\notag\\
&=\sum_{j=\ell}^{n-1}\binom{n-\ell-1}{j-\ell}(-1)^j(1-q)^{n-j-1}w^j,
\notag
\end{align}
and we have proved (\ref{3.19}). Note that $\det(c_{j\ell})=1$ and hence by (\ref{3.16}),
(\ref{3.17}) and (\ref{3.19}),
\begin{align}\label{3.22}
\mathbb{P}[G(m,n)\le\eta]&=\det\left(\sum_{y=0}^{\eta+n}\frac{(-1)^{i+K}}{i!}y^i
\frac{(-1)^{j}}{(q-1)^{j+K}}\Delta^jw_m(y-n)\right)_{0\le i,j<n}
\notag\\
&=\det\left(\sum_{y=0}^{\eta+n}a_i(y)\sum_{j=0}^{n-1}
\frac{(-1)^{j}}{(q-1)^{j+K}}\Delta^jw_m(y-n)c_{j\ell}\right)_{0\le i,\ell<n}
\notag\\
&=\det\left(\sum_{y=0}^{\eta+n}a_i(y)b_j(y)\right)_{0\le i,j<n}.
\end{align}

We will now use the fact that
\begin{equation}\label{3.23}
\sum_{y=0}^\infty a_j(y)b_k(y)=\delta_{jk}.
\end{equation}
To prove this we use the definitions of $a_j$ and $b_k$,
\begin{align}
\sum_{x=0}^\infty a_j(x)b_k(x)&=\frac{q-1}{(2\pi i)^2}\int_{\gamma_{r_2}}
\frac{dz}{z}\int_{\gamma_{r_1}}dw\left(\sum_{x=0}^\infty\left(\frac zw\right)^x\right)
\frac{(qz-1)^{j+K-1}(w-1)^k}{(z-1)^{j+1}(qw-1)^{k+K}}
\notag\\
&=\frac{q-1}{(2\pi i)^2}\int_{\gamma_{r_2}}
dz\int_{\gamma_{r_1}}dw\frac wz\frac 1{w-z}
\frac{(qz-1)^{j+K-1}(w-1)^k}{(z-1)^{j+1}(qw-1)^{k+K}}.
\notag
\end{align}
Since $1<r_2<r_1<1/q$ we see that $w=z$ is the only pole in the $w$-integral and 
hence by the residue theorem this equals
\begin{equation}
\frac{q-1}{2\pi i}\int_{\gamma_{r_2}}(qz-1)^{j-k-1}(z-1)^{k-j-1}dz.
\notag
\end{equation}
If $j<k$ the integral is zero by Cauchy's theorem. If $j>k$ we make the change of
variables $z\to 1/z$ and we see again that the integral is zero. When $j=k$ we get
\begin{equation}
\frac{q-1}{2\pi i}\int_{\gamma_{r_2}}(qz-1)^{-1}(z-1)^{-1}dz=1.
\notag
\end{equation}
Using (\ref{3.23}) in (\ref{3.22}) we get
\begin{align}
\mathbb{P}[G(m,n)\le\eta]&=\det(\delta_{ij}-\sum_{y=\eta+n+1}^\infty a_i(y)b_j(y))_{0\le i,j<n}
\notag\\
&=\det(I-K_{m,n})_{\ell^2(\{\eta+1,\eta+2,\dots\})},
\notag
\end{align}
where
\begin{align}
&K_{m,n}(x,y)=\sum_{j=0}^{n-1} a_j(x+n)b_j(y+n)
\notag\\
&=\frac{q-1}{2\pi i)^2}\int_{\gamma_{r_2}}\frac{dz}z\int_{\gamma_{r_1}}dw
\frac{z^{x+n}}{w^{y+n}}\frac{(qz-1)^{K-1}}{(qw-1)^K(z-1)}
\sum_{j=0}^{n-1}\left[\frac{(w-1)(qz-1)}{(z-1)(qw-1)}\right]^j
\notag\\
&=\frac{1}{2\pi i)^2}\int_{\gamma_{r_2}}\frac{dz}z\int_{\gamma_{r_1}}dw
\frac{z^{x+n}}{w^{y+n}}\frac{(1-qz)^{K-1}}{(1-qw)^{K-1}}
\left[\frac{(1-w)^n(1-qz)^n}{(1-z)^n(1-qw)^n}-1\right]\frac 1{w-z}
\notag\\
&=\frac{1}{2\pi i)^2}\int_{\gamma_{r_2}}\frac{dz}z\int_{\gamma_{r_1}}\frac{dw}w
\frac{z^{x+n}}{w^{y+n}}\frac{(1-qz)^{K-1}}{(1-qw)^{K-1}}
\frac{(1-w)^n(1-qz)^n}{(1-z)^n(1-qw)^n}\frac w{w-z}.
\notag
\end{align}
Here we have used the fact that by Cauchy's theorem the $-1$ term in the third line
gives a zero 
contribution in the $z$-integral since $r_1>r_2$. This proves proposition \ref{prop2.5}.

\begin{remark}
\rm The Meixner ensemble is related to Meixner polynomials since these polynomials are 
orthogonal on $\mathbb{N}$ with respect to the negative binomial weight. These
polynomials are not used explicitely in the computations above but figure in the background.
This can be seen from the Rodrigues' formula,
\begin{equation}
p_j^{(m,q)}(x)q^{j+x}\binom{x+m-1}{x}=\Delta^j\left[\binom{x+m-1}{x}q^x\prod_{k=0}^{j-1}
(x-k)\right],
\notag
\end{equation}
and the integral representation
\begin{equation}
p_j^{(m,q)}(x)=\frac{j!}{2\pi i}\int_{\gamma_r}\frac{(1-z/q)^x}{(1-z)^{x+K}}
\frac{dz}{z^{j+1}},
\notag
\end{equation}
with $0<r<1$, where $p_j^{(m,q)}(x)$ are the standard Meixner polynomials, \cite{KoSw}.
\end{remark}

\medskip
\noindent
{\bf Acknowledgement}: I thank Jon Warren for a discussion and for
sending me a preliminary version of \cite{DiWa}.

\end{document}